\theoremstyle{plain}
\newtheorem{thm}{Theorem}[section]
\newtheorem{lem}[thm]{Lemma}
\newtheorem{prop}[thm]{Proposition}
\theoremstyle{definition}
\newtheorem*{defn}{Definition}
\theoremstyle{remark}
\newtheorem*{ack}{Acknowledgement}
\newcommand{\newoperator}[2]{\DeclareMathOperator{#1}{#2}}
\newoperator{\spn}{span}
\newoperator{\graph}{graph}
\newoperator{\im}{Im}
\newoperator{\diag}{diag}
\newoperator{\Aut}{Aut}
\title[C$^*$-algebras generated by composition operators]
{C$^*$-algebras generated by multiplication operators and composition operators with rational symbol}
\author{Hiroyasu Hamada}
\address{National Institute of Technology, Sasebo College, 
Okishin, Sasebo, Nagasaki, 857-1193, Japan.}
\email{h-hamada@sasebo.ac.jp}
\keywords{composition operator, multiplication operator,
Frobenius-Perron operator, C$^*$-algebra, complex dynamical system}
\subjclass[2010]{Primary 46L55, 47B33; Secondary 37F10, 46L08}
\begin{document}

\begin{abstract}
Let $R$ be a rational function of degree at least two,
let $J_R$ be the Julia set of $R$ and let $\mu^L$ be the Lyubich measure
of $R$.
We study the C$^*$-algebra $\mathcal{MC}_R$
generated by
all multiplication operators by continuous functions in $C(J_R)$
and the composition operator $C_R$ induced by $R$
on $L^2(J_R, \mu^L)$.
We show that the C$^*$-algebra $\mathcal{MC}_R$ is isomorphic to
the C$^*$-algebra $\mathcal{O}_R (J_R)$ associated with the complex dynamical
system $\{R^{\circ n} \}_{n=1} ^\infty$.
\end{abstract}

\maketitle

\section{Introduction}

Let $\mathbb{D}$ be the open unit disk in the complex plane and
$H^2 (\mathbb{D})$ the Hardy space of
analytic functions whose power series have square-summable
coefficients.
For an analytic self-map
$\varphi$ on the unit disk $\mathbb{D}$,
the composition operator
$C_{\varphi}$ on the Hardy space $H^2 (\mathbb{D})$
is defined by $C_{\varphi} g = g \circ \varphi$ for $g \in H^2 (\mathbb{D})$.
Let $\mathbb{T}$ be the unit circle in the complex plane
and $L^2(\mathbb{T})$ the square integrable measurable functions on
$\mathbb{T}$ with respect to the normalized Lebesgue measure.
The Hardy space $H^2(\mathbb{D})$ can be identified as the closed subspace of
$L^2(\mathbb{T})$ consisting of the functions whose negative
Fourier coefficients vanish.
Let  $P_{H^2}$ be the projection from $L^2(\mathbb{T})$ onto the Hardy space $H^2(\mathbb{D})$.
For $a \in L ^{\infty} (\mathbb{T})$,
the Toeplitz operator $T_a$ on the Hardy space $H^2(\mathbb{D})$
is defined by $T_a f = P_{H^2} af$ for $f \in H^2(\mathbb{D})$.
Recently several authors considered C$^*$-algebras generated by
composition operators (and Toeplitz operators).
Most of their studies have focused
on composition operators induced by linear fractional maps (\cite{J1, J2, KMM1, KMM3, KMM2, Pa, Q, Q2, SA}).



There are some studies about C$^*$-algebras generated by
composition operators and Toeplitz operators for finite Blaschke products.
Finite Blaschke products are examples of rational functions.
For an analytic self-map
$\varphi$ on the unit disk $\mathbb{D}$,
we denote by
$\mathcal{TC}_\varphi$ the Toeplitz-composition C$^*$-algebra
generated by both the composition operator $C_{\varphi}$ and the Toeplitz
operator $T_z$. Its quotient algebra by the ideal
$\mathcal{K}$ of the compact operators is denoted by
$\mathcal{OC}_{\varphi}$.
Let $R$ be a finite Blaschke product of degree at least two with
$R(0) = 0$.
Watatani and the author \cite{HW}
proved that the quotient algebra $\mathcal{OC}_{R}$
is isomorphic to the C$^*$-algebra $\mathcal{O}_R(J_R)$ associated
with the complex dynamical system introduced in \cite{KW}.
In \cite{H} we extend this result for general finite Blaschke
products. Let $R$ be a finite Blaschke product $R$ of degree at least two.
We showed that the quotient algebra $\mathcal{OC}_{R}$
is isomorphic to a certain Cuntz-Pimsner algebra and
there is a relation between the quotient algebra $\mathcal{OC}_R$ and
the C$^*$-algebra $\mathcal{O}_R(J_R)$. In general, two C$^*$-algebras
$\mathcal{OC}_R$ and $\mathcal{O}_R(J_R)$ are slightly different.

In this paper we give a relation
between a C$^*$-algebra containing a composition operator
and the C$^*$-algebra $\mathcal{O}_R(J_R)$
for a general rational function $R$ of degree at least two.
In the above studies we deal with composition operators on the Hardy space
$H^2(\mathbb{D})$, while
we consider composition operators on $L^2$ spaces in this case.
Composition operators on $L^2$ spaces has been studied
by many authors (see for example \cite{SM}).
Let $(\Omega, \mathcal{F}, \mu)$ be a measure space and
let $\varphi$ a non-singular transformation on $\Omega$.
We define a measurable function by
$C_\varphi f = f \circ \varphi$ for $f \in L^2(\Omega, \mathcal{F}, \mu)$.
If $C_\varphi$ is bounded operator on $L^2 (\Omega, \mathcal{F}, \mu)$,
we call $C_\varphi$ the composition operator with $\varphi$.

Let $R$ be a rational function of degree at least two.
We consider the Julia set $J_R$ of $R$, the Borel $\sigma$-algebra
$\mathcal{B}(J_R)$ on $J_R$ and the Lyubich measure $\mu^L$ of $R$.
Let us denote by $\mathcal{MC}_R$ the C$^*$-algebra generated by
multiplication operators $M_a$ for $a \in C(J_R)$
and the composition operator $C_R$ on
$L^2 (J_R, \mathcal{B}(J_R), \mu^L)$.
We regard the C$^*$-algebra $\mathcal{MC}_R$ and multiplication
operators as replacements of
Toeplitz-composition C$^*$-algbras and Toeplitz operators, respectively.
We prove that the C$^*$-algebra $\mathcal{MC}_R$ is
isomorphic to the C$^*$-algebra $\mathcal{O}_R(J_R)$ associated
with the complex dynamical system.

There are two important points to prove this theorem.
First one is
to analyze operators of the form $C_R ^* M_a C_R$ for $a \in C(J_R)$.
We now consider a more general case.
Let $(\Omega, \mathcal{F}, \mu)$ be a finite measure space and $\varphi$
is a non-singular transformation. If $C_\varphi$ is bounded, then we have
$C_\varphi ^* M_a C_\varphi = M_{\mathcal{L}_\varphi(a)}$ for $a \in L^\infty
(\Omega, \mathcal{F}, \mu)$, where $\mathcal{L}_\varphi$ is the
Frobenius-Perron operator for $\varphi$. This is an extension of
covariant relations considered by Exel and Vershik \cite{EV}.
Moreover similar relations have been studied on the Hardy space
$H^2(\mathbb{D})$.
Let $\varphi$ be an inner function on $\mathbb{D}$.
Jury showed a covariant relation
$C_\varphi ^* T_a C_\varphi = T_{A_\varphi (a)}$ for
$a \in L^\infty(\mathbb{T})$,
where $A_\varphi$ is the Aleksandrov operator.

Second important point is an anaysis based on bases of Hilbert bimodules.
In \cite{H} and \cite{HW}, a Toeplitz-composition C$^*$-algebra
for a finite Blaschke product $R$
is isomorphic to a certain Cuntz-Pimsner algebra of a Hilbert bimodule $X_R$,
using a finite basis of $X_R$.
Let $R$ be a rational function of degree at least two.
The C$^*$-algebra $\mathcal{O}_R(J_R)$ associated with complex dynamical
system is defined as a Cuntz-Pimsner algebra of a Hilbert bimodule $Y$.
Unlike the cases of \cite{H} and \cite{HW}, the Hilbert bimodule $Y$ does not always have a {\it finite} basis.
Kajiwara \cite{Kaj}, however, constructed a concrete {\it countable} basis of
$Y$.
Thanks to this basis, we can prove the desired theorem.

\section{Covariant relations}

Let $(\Omega, \mathcal{F}, \mu)$ be a measure space and
let $\varphi: \Omega \to \Omega$ be a measurable transformation.
Set $ \varphi_* \mu (E) = \mu (\varphi^{-1} (E))$ for $E \in \mathcal{F}$.
Then $\varphi_* \mu$ is a measure on $\Omega$.
The measurable transformation $\varphi : \Omega \to \Omega$ is said to be
{\it non-singular} if $\varphi_* \mu(E) = 0$ whenever $\mu (E) = 0$ for
$E \in \mathcal{F}$. If $\varphi$ is non-singular, then
$\varphi_* \mu$ is absolutely continuous with respect to $\mu$.
When $\mu$ is $\sigma$-finite, we denote by $h_\varphi$ the
Radon-Nikodym derivative $\frac{d \varphi_* \mu}{d \mu}$.

Let $1 \leq p \leq \infty$.
We shall define the composition operator on $L^p(\Omega, \mathcal{F}, \mu)$.
Every non-singular transformation $\varphi: \Omega \to \Omega$ induces a linear
operator $C_\varphi$ from $L^Fp(\Omega, \mathcal{F}, \mu)$ to the linear space
of all measurable functions on $(\Omega, \mathcal{F}, \mu)$ defined as
$C_\varphi f = f \circ \varphi$ for $f \in L^p(\Omega, \mathcal{F}, \mu)$.
If $C_\varphi: L^p(\Omega, \mathcal{F}, \mu) \to L^p(\Omega, \mathcal{F}, \mu)$
is bounded, it is called a {\it composition operator} on
$L^p(\Omega, \mathcal{F}, \mu)$ induced by $\varphi$.
Let $(\Omega, \mathcal{F}, \mu)$ be $\sigma$-finite.
For $1 \leq p < \infty$, $C_\varphi$ is bounded on
$L^p(\Omega, \mathcal{F}, \mu)$ if and only if the Radon-Nikodym derivative
$h_\varphi$ is bounded (see for example \cite[Theorem 2.1.1]{SM}).
If $C_\varphi$ is bounded on $L^p(\Omega, \mathcal{F}, \mu)$ for some
$1 \leq p < \infty$, then
$C_\varphi$ is bounded on $L^p(\Omega, \mathcal{F}, \mu)$ for any
$1 \leq p < \infty$ since $h_\varphi$ is independent of $p$.
For $p=\infty$, $C_\varphi$ is bounded on
$L^\infty (\Omega, \mathcal{F}, \mu)$ for any non-singular transformation.

\begin{defn}
Let $(\Omega, \mathcal{F}, \mu)$ be a $\sigma$-finite measure space,
let $\varphi: \Omega \to \Omega$ be a non-singular transformation and
let $f \in L^1(\Omega, \mathcal{F}, \mu)$.
We define $\nu_{\varphi, f}$ by
\[
  \nu_{\varphi, f} (E) = \int_{\varphi^{-1}(E)} f d \mu
\]
for $E \in \mathcal{F}$.
Then $\nu_{\varphi, f}$ is an absolutely continuous measure with respect
to $\mu$. By the Radon-Nikodym theorem, there exists $\mathcal{L}_\varphi (f)
\in L^1(\Omega, \mathcal{F}, \mu)$ such that
\[
  \int_E \mathcal{L}_\varphi (f) d \mu = \int_{\varphi^{-1}(E)} f d \mu
\]
for $E \in \mathcal{F}$.
We can regard $\mathcal{L}_\varphi$ as a bounded operator on
$L^1(\Omega, \mathcal{F}, \mu)$ (see for example \cite[Proposition 3.1.1]{LM}).
We call $\mathcal{L}_\varphi$ the {\it Frobenius-Perron operator}.
\end{defn}

\begin{lem} \label{lem:adjoint}
Let $(\Omega, \mathcal{F}, \mu)$ be a finite measure space
and let $\varphi: \Omega \to \Omega$ be a non-singular transformation.
Suppose that $C_\varphi : L^1 (\Omega, \mathcal{F}, \mu) \to
L^1 (\Omega, \mathcal{F}, \mu)$ is bounded. Then the restriction
$\mathcal{L}_\varphi |_{L^\infty (\Omega, \mathcal{F}, \mu)}$
is a bounded operator on $L^ \infty (\Omega, \mathcal{F}, \mu)$
and $C_\varphi ^* = \mathcal{L}_\varphi |_{L^\infty (\Omega, \mathcal{F}, \mu)}$.
\end{lem}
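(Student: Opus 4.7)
The plan is to identify the Banach-space adjoint $C_\varphi^*$ of $C_\varphi : L^1(\Omega,\mathcal{F},\mu) \to L^1(\Omega,\mathcal{F},\mu)$, acting on $(L^1)^* \cong L^\infty$ (the duality is available because $\mu$ is finite and hence $\sigma$-finite), with the restriction of $\mathcal{L}_\varphi$ to $L^\infty \subset L^1$.

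First I would extend the defining Radon-Nikodym identity for $\mathcal{L}_\varphi$ from indicator test functions to bounded test functions. Taking the test function $h = \chi_E$ in the definition rewrites the defining identity as $\int h \cdot \mathcal{L}_\varphi(f)\, d\mu = \int (h \circ \varphi) \cdot f\, d\mu$. By linearity this persists for simple $h$, and then by dominated convergence (justified since $\mathcal{L}_\varphi(f) \in L^1$ and $\mu$ is finite) it extends to every bounded measurable $h$ and every $f \in L^1$.

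Second, fix $g \in L^\infty$ and test the identity against $h \in L^1$. The first step already gives $\int h \cdot \mathcal{L}_\varphi(g)\, d\mu = \int (h \circ \varphi) \cdot g\, d\mu$ for $h \in L^\infty$. For general $h \in L^1$, use the hypothesis that $C_\varphi$ is bounded on $L^1$ to approximate $h$ by truncations $h_n \in L^\infty$ with $h_n \to h$ in $L^1$ and $h_n \circ \varphi \to h \circ \varphi$ in $L^1$. Both sides of the identity pass to the limit (using $\mathcal{L}_\varphi(g) \in L^1$ on the left and $g \in L^\infty$ on the right), so the identity holds for every $h \in L^1$.

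Third, I interpret the resulting identity as $\langle h, \mathcal{L}_\varphi(g)\rangle = \langle C_\varphi h, g\rangle = \langle h, C_\varphi^* g\rangle$ in the $L^1$--$L^\infty$ pairing. Since this holds for all $h \in L^1$, the functionals on $L^1$ induced by $\mathcal{L}_\varphi(g)$ and by $C_\varphi^* g$ coincide. The isomorphism $(L^1)^* \cong L^\infty$ then simultaneously shows that $\mathcal{L}_\varphi(g)$ lies in $L^\infty$ and that $\mathcal{L}_\varphi(g) = C_\varphi^* g$ as $L^\infty$-classes. Boundedness of $\mathcal{L}_\varphi|_{L^\infty}$ follows immediately from $\|C_\varphi^*\| = \|C_\varphi\| < \infty$. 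The only step with real content is the second one: without the hypothesis that $C_\varphi$ is bounded on $L^1$, the extension of the identity from $L^\infty$ test functions to $L^1$ test functions would fail, and the identification with the full Banach adjoint would collapse; everything else is a direct application of the Radon-Nikodym defining property together with the $L^1$--$L^\infty$ duality.
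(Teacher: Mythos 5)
Your overall strategy is the same as the paper's: identify $\mathcal{L}_\varphi|_{L^\infty}$ with the Banach-space adjoint of $C_\varphi : L^1 \to L^1$ through the $L^1$--$L^\infty$ pairing, using the defining Radon--Nikodym identity on indicators together with the boundedness of $C_\varphi$ on $L^1$. However, your second step has a circularity as written. You want to pass the identity $\int h\,\mathcal{L}_\varphi(g)\,d\mu = \int (h\circ\varphi)\,g\,d\mu$ from bounded $h$ to arbitrary $h \in L^1$ by truncation, justifying convergence of the left-hand side ``using $\mathcal{L}_\varphi(g)\in L^1$.'' But if $\mathcal{L}_\varphi(g)$ is only known to lie in $L^1$, the product $h\,\mathcal{L}_\varphi(g)$ of two $L^1$ functions need not be integrable, so the left-hand side is not even defined for general $h\in L^1$, and neither $h_n\to h$ in $L^1$ nor dominated convergence gives you the limit. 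Knowing $\mathcal{L}_\varphi(g)\in L^\infty$ is exactly what makes that passage legitimate, and it is part of what you are trying to prove.

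There are two clean repairs, and you have the ingredients for both. The paper's route is to prove $\mathcal{L}_\varphi(g)\in L^\infty$ \emph{first}, via the pointwise estimate $|\mathcal{L}_\varphi(g)|\le \|g\|_\infty\,\mathcal{L}_\varphi(1)=\|g\|_\infty\,h_\varphi$, where $h_\varphi$ is bounded precisely because $C_\varphi$ is bounded on $L^1$; after that, density of simple functions in $L^1$ finishes the identification with $C_\varphi^*$. Alternatively, reorder your own argument: the functional $h\mapsto \int (h\circ\varphi)\,g\,d\mu$ is bounded on $L^1$ with norm at most $\|C_\varphi\|\,\|g\|_\infty$, so by $(L^1)^*\cong L^\infty$ it is represented by some $k\in L^\infty$; testing against $h=\chi_E$ shows $k$ and $\mathcal{L}_\varphi(g)$ have the same integrals over every measurable set, hence $k=\mathcal{L}_\varphi(g)$ a.e. This yields $\mathcal{L}_\varphi(g)\in L^\infty$ and the adjoint identity simultaneously, which is what your third step intends; the point is that the duality argument must come before, not after, the extension to all $h\in L^1$.
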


\begin{proof}
Let $f \in L^\infty (\Omega, \mathcal{F}, \mu)$.
First we shall show $\mathcal{L}_\varphi (f) \in
L^\infty (\Omega, \mathcal{F}, \mu)$.
There exists $M>0$ such that $|f| \leq M$.
It follows from \cite[Proposition 3.1.1]{LM} that
$|\mathcal{L}_\varphi (f) | \leq \mathcal{L}_\varphi (|f|) \leq
 M \mathcal{L}_\varphi (1)$.
Since $\mathcal{L}_\varphi (1) = h_\varphi$ and $C_\varphi$ is bounded
on $L^1 (\Omega, \mathcal{F}, \mu)$,
we have $\mathcal{L}_\varphi(1) \in L^\infty (\Omega, \mathcal{F}, \mu)$.
Hence $\mathcal{L}_\varphi (f) \in L^\infty (\Omega, \mathcal{F}, \mu)$.

By the definition of $\mathcal{L}_\varphi$, we have
\[
  \int_\Omega \chi_E \mathcal{L}_\varphi (f) d \mu
  = \int_\Omega \chi_{\varphi^{-1}(E)} f d \mu
  = \int_\Omega (C_\varphi \chi_E) f d \mu
\]
for $E \in \mathcal{F}$, where $\chi_E$ and $\chi_{\varphi^{-1}(E)}$
are characteristic functions on $E$ and $\varphi^{-1}(E)$
respectively.
Since $C_\varphi$ is bounded on $L^1(\Omega, \mathcal{F}, \mu)$
and the set of integrable simple functions is dense in
$L^1(\Omega, \mathcal{F}, \mu)$, the restriction map
$\mathcal{L}_\varphi |_{L^\infty (\Omega, \mathcal{F}, \mu)}$
is bounded on $L^\infty (\Omega, \mathcal{F}, \mu)$
and $C_\varphi ^* =
\mathcal{L}_\varphi |_{L^\infty (\Omega, \mathcal{F}, \mu)}$.
\end{proof}

Let $(\Omega, \mathcal{F}, \mu)$ be a finite measure space
and $\varphi: \Omega \to \Omega$ a non-singular transformation.
We consider the restriction of $\mathcal{L}_\varphi$ to
$L^\infty (\Omega, \mathcal{F}, \mu)$.
From now on, we use the same notation $\mathcal{L}_\varphi$
if no confusion can arise.

For $a \in L^\infty (\Omega, \mathcal{F}, \mu)$, we define
the multiplication operator $M_a$ on
$L^2 (\Omega, \mathcal{F}, \mu)$ by
$M_a f = a f$ for $f \in L^2 (\Omega, \mathcal{F}, \mu)$.
We show the following covariant relation.

\begin{prop} \label{prop:covariant}
Let $(\Omega, \mathcal{F}, \mu)$ be a finite measure space
and let $\varphi: \Omega \to \Omega$ be a non-singular transformation. If
$C_\varphi: L^2 (\Omega, \mathcal{F}, \mu) \to L^2 (\Omega, \mathcal{F}, \mu)
$ is bounded, then we have
\[
  C_\varphi ^* M_a C_\varphi = M_{\mathcal{L}_\varphi (a)}
\]
for $a \in L^\infty (\Omega, \mathcal{F}, \mu)$.
\end{prop}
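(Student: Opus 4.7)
The plan is to verify the operator identity by testing it on inner products of $L^2$ vectors, reducing everything to Lemma~\ref{lem:adjoint}. Fix $a \in L^\infty(\Omega, \mathcal{F}, \mu)$ and arbitrary $f, g \in L^2(\Omega, \mathcal{F}, \mu)$. First I would unfold the left-hand side and collect the two compositions into one:
\[
\langle C_\varphi^* M_a C_\varphi f, g\rangle = \langle M_a C_\varphi f, C_\varphi g\rangle = \int_\Omega a\, (f\circ\varphi)\, \overline{(g\circ\varphi)}\, d\mu = \int_\Omega a \cdot \bigl((f\bar g)\circ\varphi\bigr)\, d\mu.
\]
By the Cauchy--Schwarz inequality $f\bar g \in L^1(\Omega, \mathcal{F}, \mu)$, and by the remark at the beginning of Section~2 that $L^p$-boundedness of $C_\varphi$ for one $1 \leq p < \infty$ implies it for all, the hypothesis gives boundedness of $C_\varphi$ on $L^1$ as well, so $(f\bar g)\circ\varphi = C_\varphi(f\bar g)$ is a genuine element of $L^1$.

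Next I would invoke Lemma~\ref{lem:adjoint}, which identifies the adjoint of $C_\varphi : L^1 \to L^1$ with $\mathcal{L}_\varphi : L^\infty \to L^\infty$ under the canonical $L^1$--$L^\infty$ duality pairing $(h,b) \mapsto \int_\Omega h b\, d\mu$. Applied with $h = f\bar g$ and $b = a$, it transforms the last integral into
\[
\int_\Omega \mathcal{L}_\varphi(a) \cdot f\bar g\, d\mu = \langle M_{\mathcal{L}_\varphi(a)} f, g\rangle.
\]
Since this equality holds for every $f, g \in L^2(\Omega, \mathcal{F}, \mu)$, the operator identity $C_\varphi^* M_a C_\varphi = M_{\mathcal{L}_\varphi(a)}$ follows.

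I do not anticipate a serious obstacle, since the analytic content is already packaged in Lemma~\ref{lem:adjoint}. The only mild subtlety is bookkeeping between two functional-analytic settings: Lemma~\ref{lem:adjoint} concerns $C_\varphi$ on $L^1$ with adjoint on $L^\infty$, while the proposition treats operators on $L^2$, so one must be careful to reinterpret the $L^2$ inner product $\int (\,\cdot\,)\bar g\, d\mu$ as a duality pairing against the $L^1$ function $f\bar g$. As a more pedestrian alternative, one could test the identity first on characteristic functions $f = \chi_E$ and $g = \chi_F$ using directly the defining relation $\int_E \mathcal{L}_\varphi(a)\, d\mu = \int_{\varphi^{-1}(E)} a\, d\mu$, and then extend by bilinearity and density of simple functions in $L^2$.
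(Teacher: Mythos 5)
Your proof is correct and follows essentially the same route as the paper: unfold the inner product, rewrite $a\,(f\circ\varphi)\overline{(g\circ\varphi)}$ as $a\,C_\varphi(f\bar g)$ with $C_\varphi$ viewed on $L^1$, and apply Lemma~\ref{lem:adjoint} as an $L^1$--$L^\infty$ duality to pass to $\mathcal{L}_\varphi(a)$. Your explicit remarks on why $f\bar g\in L^1$ and why $C_\varphi$ is bounded on $L^1$ make the bookkeeping slightly more careful than the paper's one-line aside, but the argument is the same.
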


\begin{proof}
For $f, g \in L^2 (\Omega, \mathcal{F}, \mu)$,
we have
\begin{align*}
  \langle C_\varphi ^* M_a C_\varphi f, g \rangle
  &= \langle M_a C_\varphi f,  C_\varphi g \rangle
  = \int_\Omega a (f \circ \varphi) 
    \overline {(g \circ \varphi)} d \mu \\
  &= \int_\Omega a C_\varphi (f \overline{g}) d \mu
  = \int_\Omega \mathcal{L}_\varphi (a) f \overline{g} d \mu \\
  &= \langle M_{\mathcal{L}_\varphi (a)} f, g \rangle
\end{align*}
by Lemma \ref{lem:adjoint}, where
$C_\varphi$ is also regarded as the composition operator
on $L^1(\Omega, \mathcal{F}, \mu)$.
\end{proof}

\section{C$^*$-algebras associated with complex dynamical systems}

We recall the construction of Cuntz-Pimsner algebras \cite{Pi} (see also
\cite{K}). 
Let $A$ be a C$^*$-algebra and let $X$ be a right Hilbert $A$-module.
A sequence $\{ u_i \}_{i=1} ^\infty$
of $X$ is called a {\it countable basis} of X if
$\xi = \sum_{i=1} ^\infty u_i \langle u_i, \xi \rangle_A$
for $\xi \in X$, where the right hand side converges in norm.
We denote by $\mathcal{L}(X)$ the C$^*$-algebra of the adjointable bounded operators 
on $X$.  
For $\xi$, $\eta \in X$, the operator $\theta _{\xi,\eta}$
is defined by $\theta _{\xi,\eta}(\zeta) = \xi \langle \eta, \zeta \rangle_A$
for $\zeta \in X$. 
The closure of the linear span of these operators is denoted by $\mathcal{K}(X)$. 
We say that 
$X$ is a {\it Hilbert bimodule} (or {\it C$^*$-correspondence}) 
over $A$ if $X$ is a right Hilbert $A$-module 
with a $*$-homomorphism $\phi : A \rightarrow \mathcal{L}(X)$.
We always assume that $\phi$ is injective. 

A {\it representation} of the Hilbert bimodule $X$
over $A$ on a C$^*$-algebra $D$
is a pair $(\rho, V)$ constituted by a $*$-homomorphism $\rho: A \to D$ and
a linear map $V: X \to D$ satisfying
\[
  \rho(a) V_\xi = V_{\phi(a) \xi}, \quad
  V_\xi ^* V_\eta = \rho( \langle \xi, \eta \rangle_A)
\]
for $a \in A$ and $\xi, \eta \in X$.
It is known that $V_\xi \rho(b) = V_{\xi b}$ follows
automatically (see for example \cite{K}).
We define a $*$-homomorphism $\psi_V : \mathcal{K}(X) \to D$
by $\psi_V ( \theta_{\xi, \eta}) = V_{\xi} V_{\eta}^*$ for $\xi, \eta \in X$
(see for example \cite[Lemma 2.2]{KPW}).
A representation $(\rho, V)$ is said to be {\it covariant} if
$\rho(a) = \psi_V(\phi(a))$ for all $a \in J(X)
:= \phi ^{-1} (\mathcal{K}(X))$.
Suppose the Hilbert bimodule $X$
has a countable basis $\{u_i \}_{i=1} ^\infty$ and $(\rho, V)$ is
a representation of $X$.
Then $(\rho, V)$ is covariant if and only if
$\| \sum_{i=1} ^n \rho(a) V_{u_i} V_{u_i} ^* - \rho(a) \| \to 0$ as
$n \to \infty$ for $a \in J(X)$, since $\{ \sum_{i=1} ^n \theta_{u_i, u_i} \}_{n=1} ^\infty$ is an approximate unit for $\mathcal{K}(X)$.

Let $(i, S)$ be the representation of $X$ which is universal for
all covariant representations. 
The {\it Cuntz-Pimsner algebra} ${\mathcal O}_X$ is 
the C$^*$-algebra generated by $i(a)$ with $a \in A$ and 
$S_{\xi}$ with $\xi \in X$.
We note that $i$ is known to be injective
\cite{Pi} (see also \cite[Proposition 4.11]{K}).
We usually identify $i(a)$ with $a$ in $A$.

Let $R$ be a rational function of degree at least two.
We recall the definition of the C$^*$-algebra
$\mathcal{O}_R (J_R)$. 
Since the Julia set $J_R$ is completely invariant under $R$, that is,
$R(J_R) = J_R = R^{-1}(J_R)$, we can consider the restriction 
$R|_{J_R} : J_R \rightarrow J_R$.
Let $A = C(J_R)$ and $Y = C(\graph R|_{J_R})$,
where $\graph R|_{J_R} = \{(z,w) \in J_R \times J_R \, \, | \, \, w = R(z)\} $ 
is the graph of $R|_{J_R}$.
We denote by $e_R(z)$ the branch index of $R$ at $z$.
Then $Y$ is an $A$-$A$ bimodule over $A$ by 
\[
  (a\cdot f \cdot b)(z,w) = a(z) f(z,w) b(w),\quad a,b \in A, \, 
  f \in Y.
\]
We define an $A$-valued inner product $\langle \ , \ \rangle_A$ on $Y$ by 
\[
  \langle f, g \rangle_A(w) = \sum _{z \in R^{-1}(w)} e_R(z)
  \overline{f(z,w)}g(z,w),
  \quad f, g \in Y, \, w \in J_R.
\]
Then $Y$ is a Hilbert bimodule over $A$. 
The C$^*$-algebra ${\mathcal O}_R(J_R)$
is defined as the Cuntz-Pimsner algebra of the Hilbert bimodule 
$Y = C(\graph R|_{J_R})$ over $A = C(J_R)$.

\section{Main theorem}

Let $R$ be a rational function.
We define the backward orbit
$O^-(w)$ of $w \in \hat{\mathbb{C}}$ by
\[
   O ^- (w) = \{ z \in \hat{\mathbb{C}} \ | \ R^{\circ m} (z) = w
   \ \text{for some non-negative integer} \ m \}.
\]
A point $w$ in $\hat{\mathbb{C}}$ is an {\it exceptional point}
for $R$ if the backward orbit $O^- (w)$ of $w$ is finite.
We denote by $E_R$ the set of exceptional points.

\begin{defn}[Freire-Lopes-Ma\~{n}\'{e} \cite{FLM}, Lyubich \cite{L}]
Let $R$ be a rational function and $n = {\rm deg} R$.
Let $\delta_z$ be the Dirac measure at $z \in \hat{\mathbb{C}}$.
For $w \in \hat{\mathbb{C}} \smallsetminus E_R$ and $m \in \mathbb{N}$,
we define a probability measure $\mu_m ^w$ on $\hat{\mathbb{C}}$ by
\[
  \mu_m ^w = \frac{1}{n^m} \sum_{z \in (R^{\circ m})^{-1} (w)}
  e_{R^{\circ m}}(z) \delta_z.
\]
The sequence $\{ \mu_m ^w \}_{m=1} ^\infty$ converges weakly to a probability
measure $\mu^L$, which is called the {\it Lyubich measure} of $R$.
The measure $\mu^L$ is independent of the choice of $w \in
\hat{\mathbb{C}} \smallsetminus E_R$.
\end{defn}

Let $R$ be a rational function of degree at least two.
We will denote by $\mathcal{B}(J_R)$ the Borel $\sigma$-algebra on the Julia
set $J_R$.
In this section we consider the finite measure space
$(J_R, \mathcal{B}(J_R), \mu^{L})$.
It is known that the support of 
the Lyubich measure $\mu^L$ is the Julia set $J_R$.
Moreover the Lyubich measure $\mu^L$ is regular on the Julia set $J_R$ and
a invariant measure with respect to $R$,
that is, $\mu^L (E) = \mu^L (R^{-1} (E))$ for $E \in \mathcal{B}(J_R)$.
Thus the composition operator $C_R$ on $L^2 (J_R, \mathcal{B}(J_R), \mu^{L})$
is an isometry.

\begin{defn}
For a rational function $R$ of degree at least two,
we denote by $\mathcal{MC}_R$ the C$^*$-algebra
generated by all multiplication operators by
continuous functions in $C(J_R)$
and the composition operator $C_R$ on $L^2(J_R, \mathcal{B}(J_R), \mu^{L})$.
\end{defn}

Let a rational function $R$ of degree at least two.
In this section we shall show that the C$^*$-algebra $\mathcal{MC}_R$
is isomorphic to the C$^*$-algebra $\mathcal{O}_R (J_R)$.
First we give a concrete expression of the restriction of
$\mathcal{L}_R$ to $C(J_R)$.
This result immediately follows from \cite{L} and
Lemma \ref{lem:adjoint}.

\begin{prop}[Lyubich {\cite[Lemma, p.366]{L}}]
Let $R$ be a rational function of degree $n$ at least two.
Then $\mathcal{L}_R : C(J_R) \to C(J_R)$ and
\[
  (\mathcal{L}_R (a)) (w) = \frac{1}{n} \sum_{z \in R^{-1} (w)} e_R (z) a(z),
  \quad w \in J_R
\]
for $a \in C(J_R)$.
\end{prop}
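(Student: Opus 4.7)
Define $P : C(J_R) \to C(J_R)$ by $(Pa)(w) = \frac{1}{n}\sum_{z \in R^{-1}(w)} e_R(z) a(z)$. The proposition asks us to identify $P$ with $\mathcal{L}_R|_{C(J_R)}$. The nontrivial input from Lyubich \cite{L} is that $Pa$ is indeed continuous on $J_R$: the subtle case is $w$ a critical value, where the set $R^{-1}(w)$ drops in cardinality, but the weighting by branch indices $e_R(z)$ makes the sum continuous as $w$ approaches such values.

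Granting this, the plan is to establish the adjoint identity
\[
\int_{J_R} P(a) \cdot b \, d\mu^L \;=\; \int_{J_R} a \cdot (b \circ R) \, d\mu^L \qquad (a, b \in C(J_R)),
\]
and then combine it with Lemma \ref{lem:adjoint}. That lemma identifies $C_R^*$ with $\mathcal{L}_R|_{L^\infty}$ in the $L^1$--$L^\infty$ duality, which unwinds to $\int \mathcal{L}_R(a)\, b \, d\mu^L = \int a\, (b\circ R)\, d\mu^L$ for $a \in L^\infty(\mu^L)$ and $b \in L^1(\mu^L)$. Subtracting the two identities and using density of $C(J_R)$ in $L^1(\mu^L)$ (by regularity of $\mu^L$) forces $P(a) = \mathcal{L}_R(a)$ in $L^\infty(\mu^L)$; since $P(a)$ is continuous and the support of $\mu^L$ is all of $J_R$, this equality is then pointwise on $J_R$, giving both the mapping property $\mathcal{L}_R : C(J_R) \to C(J_R)$ and the explicit formula simultaneously.

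To prove the adjoint identity I would exploit the defining sequence $\mu_m^w \to \mu^L$ together with the multiplicativity of branch indices, $e_{R^{\circ m}}(z) = e_R(z)\, e_{R^{\circ(m-1)}}(R(z))$. Substituting into the definition of $\mu_m^w$ and grouping each preimage $z \in (R^{\circ m})^{-1}(w)$ by its image $y = R(z) \in (R^{\circ(m-1)})^{-1}(w)$ yields, after a short calculation,
\[
\int_{J_R} a \cdot (b \circ R)\, d\mu_m^w \;=\; \int_{J_R} P(a) \cdot b \, d\mu_{m-1}^w
\]
for any $w \in \hat{\mathbb{C}} \setminus E_R$ and $m \geq 1$. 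Because $a$, $b$, and $P(a)$ are all continuous on $J_R$ and $\mu_m^w$ converges weakly to $\mu^L$, passing to the limit $m \to \infty$ on both sides produces the desired identity.

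The main obstacle is the continuity of $P(a)$ at critical values of $R$, which is the content of the lemma from \cite{L} and which I would simply cite rather than reprove. Beyond that, everything reduces to a standard weak-convergence computation combined with the adjoint characterization of $\mathcal{L}_R$ already supplied by Lemma \ref{lem:adjoint}.
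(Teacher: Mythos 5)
Your proposal is correct and follows essentially the same route as the paper, which simply asserts that the proposition ``immediately follows from \cite{L} and Lemma \ref{lem:adjoint}'': you cite Lyubich for the one genuinely nontrivial point (continuity of the weighted preimage sum across critical values) and then identify the resulting operator with $\mathcal{L}_R$ through the $L^1$--$L^\infty$ duality of Lemma \ref{lem:adjoint}, density of $C(J_R)$, and the fact that $\operatorname{supp}\mu^L=J_R$. Your weak-convergence computation with $\mu_m^w$ and the multiplicativity of branch indices is a correct and welcome filling-in of the details the paper omits (just take $w\in J_R$, which automatically avoids $E_R$, so that all the measures $\mu_m^w$ live on $J_R$).
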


Let $X = C(J_R)$ and $n = \deg R$.
Then $X$ is an $A$-$A$ bimodule over $A$ by 
\[
   (a \cdot \xi \cdot b) (z) = a(z) \xi(z) b(R(z)) \quad a,b \in A, \, 
  \xi \in X.
\]
We define an $A$-valued inner product $\langle \ , \ \rangle_A$ on $X$ by
\[
  \langle \xi, \eta \rangle _A (w) = \frac{1}{n} \sum_{z \in R^{-1} (w)}
  e_R (z) \overline{\xi(z)} \eta(z)
  \, \,
  \left (\, = (\mathcal{L}_R (\overline{\xi} \eta))(w) \, \right ), \quad
  \xi, \eta \in X.
\]
Then $X$ is a Hilbert bimodule over $A$.
Put $\| \xi \|_2 = \| \langle \xi, \xi \rangle_A \|_\infty ^{1/2}$
for $\xi \in X$,
where $\| \ \|_\infty$ is the sup norm on $J_R$.
It is easy to see that $X$ is isomorphic to $Y$ as Hilbert bimodules over $A$.
Hence the C$^*$-algebra $\mathcal{O}_R(J_R)$ is isomorphic to
the Cuntz-Pimsner algebra $\mathcal{O}_X$ constructed from $X$.

We need some analyses based on bases of the Hilbert bimodule $X$
to show an equation containing the composition operator $C_R$
and multiplication operators.

\begin{lem} \label{lem:key_lem}
Let $u_1, \dots, u_N \in X$.
Then
\[
  \sum_{i=1} ^N M_{u_i} C_R C_R ^* M_{u_i} ^* a
  = \sum_{i=1} ^N u_i \cdot \langle u_i , a \rangle_A
\]
for $a \in A$.
\end{lem}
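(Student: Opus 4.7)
The plan is to prove the identity term by term, that is, to show that $M_{u_i} C_R C_R^* M_{u_i}^* a = u_i \cdot \langle u_i , a \rangle_A$ for each $i$, and then sum. By linearity this reduces the whole lemma to a straightforward unwinding of the three operators that stand between $M_{u_i}$ and $M_{u_i}^*$.

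First I would apply $M_{u_i}^*$. Since $M_{u_i}$ is multiplication by $u_i \in C(J_R) \subset L^\infty(J_R, \mu^L)$, its Hilbert-space adjoint is multiplication by $\overline{u_i}$, so $M_{u_i}^* a = \overline{u_i}\, a$, which still lies in $C(J_R)$. Next I would apply $C_R^*$. For this step I would use Proposition \ref{prop:covariant} applied to the constant function $\mathbf{1}$: evaluating $C_R^* M_b C_R \mathbf{1} = M_{\mathcal{L}_R(b)} \mathbf{1}$ with $b = \overline{u_i}\, a$ gives $C_R^*(\overline{u_i}\, a) = \mathcal{L}_R(\overline{u_i}\, a)$. (Alternatively one can quote Lemma \ref{lem:adjoint} directly, since $\overline{u_i}\, a \in L^\infty(J_R, \mu^L)$.) By the explicit formula for $\mathcal{L}_R$ on $C(J_R)$ recorded above,
\[
  \mathcal{L}_R(\overline{u_i}\, a)(w) = \frac{1}{n} \sum_{z \in R^{-1}(w)} e_R(z)\, \overline{u_i(z)}\, a(z) = \langle u_i, a \rangle_A(w),
\]
so $C_R^* M_{u_i}^* a = \langle u_i, a \rangle_A$ as an element of $C(J_R) \subset L^2(J_R, \mu^L)$.

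Then applying $C_R$ gives $C_R \langle u_i, a \rangle_A = \langle u_i, a \rangle_A \circ R$, and a further application of $M_{u_i}$ produces the function $z \mapsto u_i(z)\, \langle u_i, a\rangle_A(R(z))$. By the very definition of the bimodule action on $X$, namely $(\xi \cdot b)(z) = \xi(z)\, b(R(z))$, this is precisely $u_i \cdot \langle u_i, a \rangle_A$. Summing over $i$ yields the statement.

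I do not anticipate a serious obstacle here; the only subtlety worth being careful about is the identification of $C_R^*$ restricted to $C(J_R)$ with the Frobenius--Perron operator $\mathcal{L}_R$ — and this is exactly what Lemma \ref{lem:adjoint} (and the subsequent concrete formula for $\mathcal{L}_R$ on $C(J_R)$) provides. The rest is a clean unpacking of the adjoint of multiplication, composition with $R$, and the bimodule action, all of which match up on the nose.
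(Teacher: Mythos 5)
Your proposal is correct and follows essentially the same route as the paper: both arguments reduce the computation to the covariance relation of Proposition \ref{prop:covariant} by exploiting $C_R \mathbf{1} = \mathbf{1}$ (the paper writes $a = M_a C_R 1$ and pushes the whole chain through at once, while you evaluate $C_R^*$ on $\overline{u_i}\,a$ term by term, which is the same computation). The final identification of $u_i\,(\mathcal{L}_R(\overline{u_i}\,a)\circ R)$ with $u_i\cdot\langle u_i,a\rangle_A$ via the bimodule action also matches the paper's last step.
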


\begin{proof}
Since $a = M_a C_R 1$, we have
\begin{align*}
\sum_{i=1} ^N M_{u_i} C_R C_R ^* M_{u_i} ^* a
&= \sum_{i=1} ^N M_{u_i} C_R C_R ^* M_{u_i} ^* M_a C_R 1 \\
&= \sum_{i=1} ^N M_{u_i} C_R C_R ^* M_{\overline u_i a} C_R 1 \\
&= \sum_{i=1} ^N M_{u_i} C_R M_{\mathcal{L}_R(\overline u_i a)} 1
   \quad \quad \text{by Proposition \ref{prop:covariant}} \\
&= \sum_{i=1} ^N M_{u_i} M_{\mathcal{L}_R(\overline u_i a) \circ R} C_R 1 \\
&= \sum_{i=1} ^N u_i \mathcal{L}_R(\overline u_i a) \circ R \\
&= \sum_{i=1} ^N u_i \cdot \langle u_i , a \rangle_A,
\end{align*}
which completes the proof.
\end{proof}

\begin{lem} \label{lem:norm}
Let $\{ u_i \}_{i=1} ^\infty$ be a countable basis of $X$.
Then
\[
  0 \leq \sum_{i=1} ^N M_{u_i} C_R C_R ^* M_{u_i} ^*  \leq I. 
\]
\end{lem}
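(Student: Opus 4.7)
The lower bound is immediate: each summand factors as $M_{u_i}C_R C_R^* M_{u_i}^* = (M_{u_i}C_R)(M_{u_i}C_R)^* \geq 0$. For the upper bound, since $C(J_R)$ is dense in $L^2(J_R, \mu^L)$, the plan is to verify $\langle T_N a, a\rangle_{L^2} \leq \|a\|_{L^2}^2$ for every $a \in A = C(J_R)$, where $T_N := \sum_{i=1}^N M_{u_i}C_R C_R^* M_{u_i}^*$.

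The first step is to rewrite the left-hand side using Lemma \ref{lem:key_lem}, which gives $T_N a = \sum_{i=1}^N u_i \cdot \langle u_i, a\rangle_A$. Since the right action is $u_i \cdot b = M_{u_i}C_R b$, pairing with $a$ and moving $C_R$ to the other factor via Lemma \ref{lem:adjoint} turns each summand into $\langle \langle u_i, a\rangle_A,\, \mathcal{L}_R(\overline{u_i}a)\rangle_{L^2}$. The explicit formula for $\mathcal{L}_R$ shows that $\mathcal{L}_R(\overline{u_i}a)$ is exactly $\langle u_i, a\rangle_A$, so
\[
  \langle T_N a, a\rangle_{L^2}
   = \sum_{i=1}^N \|\langle u_i, a\rangle_A\|_{L^2}^2
   = \int_{J_R} \langle P_N a, a\rangle_A \, d\mu^L,
\]
where $P_N := \sum_{i=1}^N \theta_{u_i, u_i} \in \mathcal{L}(X)$.

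The key step is a Bessel-type inequality $\langle P_N \xi, \xi\rangle_A \leq \langle \xi, \xi\rangle_A$ in $A$ for every $\xi \in X$. The sequence $\{P_N\}$ is increasing in $\mathcal{L}(X)$ because $P_{N+1} - P_N = \theta_{u_{N+1}, u_{N+1}} \geq 0$, and the countable basis property gives $P_N \xi \to \xi$ in $X$, hence $\{\langle P_N\xi, \xi\rangle_A\}$ is an increasing sequence in $A_+$ converging in norm to $\langle \xi, \xi\rangle_A$; closedness of the positive cone of $A$ then upgrades this to the claimed order inequality.

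Applying the inequality with $\xi = a$, integrating against $\mu^L$, and using $\int_{J_R} \mathcal{L}_R(f)\,d\mu^L = \int_{J_R} f\,d\mu^L$ (a direct consequence of $R^{-1}(J_R) = J_R$ in the defining identity of $\mathcal{L}_R$) yields
\[
  \int_{J_R} \langle a, a\rangle_A\,d\mu^L
   = \int_{J_R} \mathcal{L}_R(|a|^2)\,d\mu^L
   = \|a\|_{L^2}^2,
\]
which gives the desired bound. The main conceptual step I expect to be the Bessel-type inequality in the Hilbert bimodule; the rest is careful bookkeeping of the several roles played by $\mathcal{L}_R$, $\langle \cdot,\cdot\rangle_A$, and $C_R^*$ on the three function spaces $A$, $X$, and $L^2(J_R, \mu^L)$.
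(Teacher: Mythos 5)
Your proof is correct, and it reaches the upper bound by a genuinely different (module-level) route from the paper's. The paper stays entirely at the level of the scalar quadratic form: by Lemma \ref{lem:key_lem}, $\langle T_N f, f\rangle = \int_{J_R}\bigl(\sum_{i=1}^N u_i\cdot\langle u_i,f\rangle_A\bigr)\overline{f}\,d\mu^L$ for $f\in C(J_R)$; the basis property gives $\sum_{i=1}^N u_i\cdot\langle u_i,f\rangle_A\to f$ in $\|\cdot\|_2$, hence uniformly by the equivalence of $\|\cdot\|_2$ and $\|\cdot\|_\infty$ on $X$ (quoted from \cite{KW}), so $\langle T_N f,f\rangle\to\langle f,f\rangle$, and this is combined with the monotonicity of $N\mapsto\langle T_N f,f\rangle$ (left implicit in the paper, but needed to pass from convergence to the inequality) before invoking density of $C(J_R)$. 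You instead establish the identity $\langle T_N a,a\rangle_{L^2}=\int_{J_R}\langle P_N a,a\rangle_A\,d\mu^L$ with $P_N=\sum_{i=1}^N\theta_{u_i,u_i}$, prove the Bessel inequality $\langle P_N\xi,\xi\rangle_A\leq\langle\xi,\xi\rangle_A$ inside $A$ from the same two ingredients (monotonicity of the partial sums plus norm convergence, now in the positive cone of $A$), and then return to $L^2$ via $\int_{J_R}\mathcal{L}_R(f)\,d\mu^L=\int_{J_R}f\,d\mu^L$. Both arguments are sound; yours makes the monotonicity explicit and avoids the norm-equivalence citation (norm convergence of $\langle P_N\xi,\xi\rangle_A$ in $A$ suffices), at the cost of the extra bookkeeping identity relating the $A$-valued and $L^2$ inner products. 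One small presentational point: when you ``move $C_R$ to the other factor'' you are using the $L^2$ Hilbert-space adjoint of $C_R$, whereas Lemma \ref{lem:adjoint} as stated concerns the Banach adjoint on $L^1$--$L^\infty$; the identification $C_R^*g=\mathcal{L}_R(g)$ for $g\in L^\infty$ follows from the same duality computation (and is exactly how the paper uses it in Proposition \ref{prop:covariant}), so this is a matter of citation rather than a gap.
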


\begin{proof}
Set $T_N := \sum_{i=1} ^N M_{u_i} C_R C_R ^* M_{u_i} ^*$.
It is clear that $T_N$ is a positive operator.
We shall show $T_N \leq I$. By Lemma \ref{lem:key_lem},
\[
\langle T_N f, f \rangle = \int_{J_R} (T_N f) (z)
\overline{f(z)} d \mu ^L (z)
= \int_{J_R} \left( \sum_{i=1} ^N u_i \cdot
\langle u_i, f \rangle_A \right ) (z) \overline{f(z)} d \mu ^L (z)
\]
for $f \in C(J_R)$.
Since $\{ u_i \}_{i=1} ^\infty$ is a countable basis of $X$, 
for $f \in C(J_R)$, we have
$\sum_{i=1} ^N u_i \cdot \langle u_i, f \rangle_A \to f$
with respect to $\| \, \, \|_2$ as $N \to \infty$.
Since the two norms $\| \, \, \|_2$ and $\| \, \, \|_\infty$ are
equivalent (see the proof of \cite[Proposition 2.2]{KW}),
$\sum_{i=1} ^N u_i \cdot \langle u_i, f \rangle_A$ converges to $f$
with respect to $\| \, \, \|_\infty$.
Thus
\[
  \langle T_N f, f \rangle \to \int_{J_R} f(z) \overline{f(z)} d \mu ^L (z) = \langle f, f \rangle \quad {\text as} \, \, \, N \to \infty
\]
for $f \in C(J_R)$.
Therefore $\langle T_N f, f \rangle \leq \langle f, f \rangle$ for $f \in C(J_R)$.
Since the Lyubich measure $\mu^L$ on the Julia set $J_R$
is regular, $C(J_R)$ is dense in $L^2 (J_R, \mathcal{B}(J_R), \mu^{L})$. Hence
we have $T_N \leq I$. This completes the proof.
\end{proof}

Let $\mathcal{B}(R)$ be the set of branched points of a rational function $R$.
We now recall a description of the ideal $J(X)$ of $A$.
By \cite[Proposition 2.5]{KW}, we can write
$J(X) = \{ a \in A \, | \,
a \, \, \text{vanishes on} \, \, \mathcal{B}(R) \}$.
We define a subset $J(X)^0$ of $J(X)$
by $J(X) ^0 = \{ a \in A \, | \,
a \, \, \text{vanishes on} \, \, \mathcal{B}(R) \, \,
\text{and has compact support on } J_R \smallsetminus \mathcal{B}(R) \}$.
Since $\mathcal{B}(R)$ is a finite set (\cite[Corollary 2.7.2]{B}),
$J(X)^0$ is dense in $J(X)$.

\begin{lem} \label{lem:countable_basis}
There exists a countable basis $\{ u_i \}_{i=1} ^\infty$ of $X$
such that
\[
  \sum_{i=1} ^\infty M_a M_{u_i} C_R C_R ^* M_{u_i} ^* = M_a
\]
for $a \in J(X)$.
\end{lem}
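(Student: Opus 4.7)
The plan is to interpret the desired identity as the covariance relation for a natural representation of $X$ on $B(L^2(J_R,\mu^L))$, and to take $\{u_i\}_{i=1}^\infty$ to be the countable basis of $X$ (transported from $Y$ via the Hilbert-bimodule isomorphism) constructed by Kajiwara \cite{Kaj}. The representation $(\rho, V)$ will be given by $\rho(a) = M_a$ and $V_\xi = M_\xi C_R$; its axioms are verified immediately using Proposition \ref{prop:covariant}, since $\rho(a) V_\xi = M_{a\xi} C_R = V_{\phi(a) \xi}$ and $V_\xi^* V_\eta = C_R^* M_{\overline{\xi}\eta} C_R = M_{\mathcal{L}_R(\overline{\xi} \eta)} = \rho(\langle \xi, \eta\rangle_A)$. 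As recalled in Section 3, this yields a contractive $*$-homomorphism $\psi_V : \mathcal{K}(X) \to B(L^2(J_R,\mu^L))$ with $\psi_V(\theta_{\xi, \eta}) = M_\xi C_R C_R^* M_\eta^*$.

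The heart of the argument then uses the approximate-unit property of $\{\sum_{i=1}^N \theta_{u_i, u_i}\}_{N=1}^\infty$ in $\mathcal{K}(X)$. For $a \in J(X)$ we have $\phi(a) \in \mathcal{K}(X)$, so
\[
\phi(a) \sum_{i=1}^N \theta_{u_i, u_i} \;=\; \sum_{i=1}^N \theta_{a u_i,\, u_i} \;\longrightarrow\; \phi(a)
\]
in the operator norm on $\mathcal{L}(X)$. Applying the contractive $*$-homomorphism $\psi_V$ transports this to norm convergence $\sum_{i=1}^N M_a M_{u_i} C_R C_R^* M_{u_i}^* \to \psi_V(\phi(a))$ in $B(L^2(J_R,\mu^L))$.

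The remaining task is to identify $\psi_V(\phi(a))$ as $M_a$, which I would do by computing strong limits. Lemma \ref{lem:key_lem} gives $T_N f := \sum_{i=1}^N M_{u_i} C_R C_R^* M_{u_i}^* f = \sum_{i=1}^N u_i \cdot \langle u_i, f\rangle_A$ for $f \in C(J_R)$; the countable-basis property together with the equivalence of $\|\cdot\|_2$ and $\|\cdot\|_\infty$ on $X$ (recalled in the proof of Lemma \ref{lem:norm}) gives $T_N f \to f$ in sup norm, hence in $L^2$ since $\mu^L$ is a probability measure. The uniform bound $T_N \leq I$ from Lemma \ref{lem:norm} together with density of $C(J_R)$ in $L^2(J_R,\mu^L)$ extends this to $T_N \to I$ in the strong operator topology, so $M_a T_N \to M_a$ strongly. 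Uniqueness of strong limits then forces $\psi_V(\phi(a)) = M_a$, completing the proof. The main obstacle is the mismatch between the module norm on $\mathcal{L}(X)$, where the approximate-unit convergence lives, and the operator norm on $B(L^2(J_R,\mu^L))$, where the conclusion is stated; the contractivity of $\psi_V$ is precisely the bridge that closes this gap.
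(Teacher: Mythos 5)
Your proof is correct, but it takes a genuinely different route from the paper's. The paper exploits the \emph{specific} structure of Kajiwara's basis: for $b$ in the dense subset $J(X)^0$ of functions vanishing on $\mathcal{B}(R)$ with compact support in $J_R \smallsetminus \mathcal{B}(R)$, the supports of $b$ and $u_m$ are eventually disjoint, so $\sum_{i=1}^m M_b M_{u_i} C_R C_R^* M_{u_i}^*$ equals $M_b$ \emph{exactly} for large $m$ by Lemma \ref{lem:key_lem}; general $a \in J(X)$ is then handled by a norm approximation $\|a-b\|<\varepsilon/2$ using the uniform bound of Lemma \ref{lem:norm}. You instead work abstractly: you build the representation $(\rho,V)$ (which the paper only does later, in the proof of the main theorem), push the approximate-unit convergence $\phi(a)\sum_{i=1}^N\theta_{u_i,u_i}\to\phi(a)$ in $\mathcal{K}(X)$ through the contractive $*$-homomorphism $\psi_V$, and identify the norm limit $\psi_V(\phi(a))$ with $M_a$ by comparing against the strong limit $T_N\to I$ obtained from Lemmas \ref{lem:key_lem} and \ref{lem:norm}. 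Your argument has the advantage of working for \emph{any} countable basis (it needs Kajiwara's construction only for existence, and it makes clear the identity is basis-independent), and it bypasses the explicit description of $J(X)$ and the dense subset $J(X)^0$ entirely; the paper's argument is more elementary and self-contained at the level of concrete operator identities, and does not presuppose the approximate-unit property of $\{\sum_{i=1}^n\theta_{u_i,u_i}\}$ (though the paper does assert that property in Section 3, so you are entitled to it). There is no circularity in front-loading the representation axioms into the lemma, since their verification uses only Proposition \ref{prop:covariant}.
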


\begin{proof}
By \cite[Subsection 3.1]{Kaj},
there exists a countable basis $\{ u_i \}_{i=1} ^\infty$ of $X$
satisfying the following property.
For any $b \in J (X) ^0$, there exists $M>0$  such that
${\rm supp} \, b \cap {\rm supp} \, u_m = \emptyset$ for $m \geq M$.
Since $J (X) ^0$ is dense in $J (X)$,
for any $a \in A$ and any $\varepsilon > 0$, there exists
$b \in J (X) ^0$
such that $\| a - b \| < \varepsilon / 2$.
Let $m \geq M$.
Then by Lemma \ref{lem:key_lem} and $b u_i = 0$ for
$i \geq m$, it follows that
\[
   \sum_{i=1} ^m M_b M_{u_i} C_R C_R ^* M_{u_i} ^* f
   = \sum_{i=1} ^m b u_i \cdot \langle u_i, f \rangle_A
   = \sum_{i=1} ^\infty b u_i \cdot \langle u_i, f \rangle_A
   = bf =M_b f
\]
for $f \in C(J_R)$. Since
$C(J_R)$ is dense in $L^2(J_R, \mathcal{B}(J_R), \mu^L)$, we have
\[
  \sum_{i=1} ^m M_b M_{u_i} C_R C_R ^* M_{u_i} ^* = M_b.
\]
From Lemma \ref{lem:norm} it follows that
\begin{align*}
\left \| \sum_{i=1} ^m M_a M_{u_i} C_R C_R ^* M_{u_i} ^* - M_a \right \|
& \leq
\left \| \sum_{i=1} ^m M_a M_{u_i} C_R C_R ^* M_{u_i} ^*
- \sum_{i=1} ^m M_b M_{u_i} C_R C_R ^* M_{u_i} ^* \right \| \\
& \quad \quad +
\left \| \sum_{i=1} ^m M_b M_{u_i} C_R C_R ^* M_{u_i} ^* - M_b
\right \| + \| M_b - M_a \| \\
& \leq  \| M_a - M_b \| \, \left \| \sum_{i=1} ^m M_{u_i} C_R C_R ^* M_{u_i} ^*
\right \| + \| M_a - M_b \| \\
& < \frac{\varepsilon}{2} + \frac{\varepsilon}{2} = \varepsilon,
\end{align*}
which completes the proof.
\end{proof}

The following theorem is the main result of the paper.

\begin{thm}
Let $R$ be a rational function of degree at least two.
Then $\mathcal{MC}_R$ is isomorphic to
$\mathcal{O}_R(J_R)$.
\end{thm}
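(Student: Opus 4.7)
The plan is to construct a covariant representation of the Hilbert bimodule $X = C(J_R)$ over $A = C(J_R)$ on $L^2(J_R, \mathcal{B}(J_R), \mu^L)$, apply the universal property of the Cuntz--Pimsner algebra $\mathcal{O}_X \cong \mathcal{O}_R(J_R)$ to produce a surjection onto $\mathcal{MC}_R$, and then deduce injectivity from the known simplicity of $\mathcal{O}_R(J_R)$. First I would set $\rho(a) = M_a$ for $a \in A$ and $V_\xi = M_\xi C_R$ for $\xi \in X$. The left-action relation $\rho(a) V_\xi = V_{a \cdot \xi}$ is immediate from $M_a M_\xi = M_{a\xi}$, and the inner-product relation
\[
V_\xi^* V_\eta = C_R^* M_{\overline{\xi}\eta} C_R = M_{\mathcal{L}_R(\overline{\xi}\eta)} = M_{\langle \xi, \eta \rangle_A} = \rho(\langle \xi, \eta \rangle_A)
\]
is Proposition \ref{prop:covariant} combined with the formula for $\langle \cdot, \cdot \rangle_A$ recorded in Section 3.

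Next I would verify that $(\rho,V)$ is covariant. By the characterization recorded just after the definition of covariant representation, it suffices to exhibit a countable basis $\{u_i\}$ of $X$ for which $\|\sum_{i=1}^n \rho(a) V_{u_i} V_{u_i}^* - \rho(a)\| \to 0$ for every $a \in J(X)$; Lemma \ref{lem:countable_basis}, applied to Kajiwara's basis, provides exactly this, since $V_{u_i} V_{u_i}^* = M_{u_i} C_R C_R^* M_{u_i}^*$. The universal property of $\mathcal{O}_X$ then delivers a $*$-homomorphism $\pi : \mathcal{O}_R(J_R) \to B(L^2(J_R, \mathcal{B}(J_R), \mu^L))$ with $\pi(a) = M_a$ and $\pi(S_\xi) = M_\xi C_R$. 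Taking $\xi = 1_X$ yields $\pi(S_{1_X}) = C_R$, so the image of $\pi$ contains every generator of $\mathcal{MC}_R$ and coincides with it; hence $\pi$ is surjective onto $\mathcal{MC}_R$.

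The main obstacle is injectivity of $\pi$. I would resolve it by invoking the simplicity of $\mathcal{O}_R(J_R)$ proved in \cite{KW} for rational functions of degree at least two: since $\pi(1_A) = I \neq 0$, the non-zero $*$-homomorphism $\pi$ out of a simple C$^*$-algebra must be injective, yielding the isomorphism $\mathcal{MC}_R \cong \mathcal{O}_R(J_R)$. If one prefers not to cite simplicity, an alternative is the gauge-invariant uniqueness theorem; implementing the gauge action spatially on $L^2(J_R, \mu^L)$ itself is obstructed by the presence of periodic orbits of $R$ (no unimodular cocycle $w_t \in L^\infty(J_R,\mu^L)$ can satisfy $w_t / (w_t \circ R) = t$ on such orbits), so one would first dilate the representation to $L^2(J_R, \mu^L) \otimes L^2(\mathbb{T})$ by $M_a \otimes I$ and $C_R \otimes M_z$, where the gauge action is implemented by $I \otimes U_t$ with $U_t$ the rotation unitaries, and then descend the resulting isomorphism back to $\mathcal{MC}_R$.
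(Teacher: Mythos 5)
Your proposal is correct and follows essentially the same route as the paper: the representation $\rho(a)=M_a$, $V_\xi=M_\xi C_R$, the verification of the bimodule relations via Proposition \ref{prop:covariant}, covariance via Lemma \ref{lem:countable_basis} applied to Kajiwara's basis, surjectivity from the generators, and injectivity from the simplicity of $\mathcal{O}_R(J_R)$. The paper's proof is exactly this (it records the injectivity of $\rho$ rather than $\pi(1_A)=I\neq 0$, which serves the same purpose), and your gauge-uniqueness digression is unnecessary.
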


\begin{proof}
Put $\rho(a) = M_a$ and $V_\xi = M_\xi C_R$ for $a \in A$ and $\xi \in X$.
Then we have
\[
  \rho(a) V_\xi = M_a M_\xi C_R = M_{a \xi} C_R =V_{a \cdot \xi}
\]
and
\[
  V_\xi ^* V_\eta = C_R ^* M_\xi ^* M_\eta C_R 
  = C_R ^* M_{\overline{\xi} \eta} C_R
  = M_{\mathcal{L}_R (\overline{\xi} \eta)}
  = \rho (\mathcal{L}_R (\overline{\xi} \eta))
  = \rho ( \langle \xi, \eta \rangle_A)
\]
for $a \in A$ and $\xi, \eta \in X$
by Proposition \ref{prop:covariant}.
Let $\{ u_i \} _{i=1} ^\infty$ be a countable basis of $X$.
Then, applying Lemma \ref{lem:countable_basis},
\[
  \sum_{i=1} ^\infty \rho(a) V_{u_i} V_{u_i} ^*
  = \sum_{i=1} ^\infty M_a M_{u_i} C_R C_R ^* M_{u_i} ^*
  = M_a = \rho(a)
\]
for $a \in J (X)$.
Since the support of the Lyubich measure $\mu^L$ is
the Julia set $J_R$, the $*$-homomorphism $\rho$ is injective.
By the universality and the simplicity of $\mathcal{O}_R(J_R)$
(\cite[Theorem 3.8]{KW}),
the C$^*$-algebra $\mathcal{MC}_R$ is isomorphic to
$\mathcal{O}_R(J_R)$.
\end{proof}

\begin{ack}
The author wishes to express his thanks to Professor Hiroyuki Takagi
for several helpful comments concerning to composition operators.
\end{ack}

\end{document}